\newtheorem{theorem}{Theorem}[section]
\newtheorem{lemma}[theorem]{Lemma}
\newcommand{\dd}{\mathrm{d}}
\newcommand{\ee}{\mathrm{e}}
\renewcommand{\leq}{\leqslant}
\renewcommand{\geq}{\geqslant}
\begin{document}

\title{On the expected number of successes in a sequence of nested\\ Bernoulli trials}
\author{Eckhard Schlemm}
\address{Wolfson College, University of Cambridge, United Kingdom}
\email{es555@cam.ac.uk}
\subjclass[2010]{Primary: 60F05 Secondary: 37C25, 60A99}
\keywords{Bernoulli trials, expectation, fixed-point theorem, frequentism}
\begin{abstract}
We analyse the asymptotic behaviour of the probability of observing the expected number of successes at each stage of a sequence of nested Bernoulli trials. Our motivation is the attempt to give a genuinely frequentist interpretation to the notion of probability based on finite sample sizes. The main result is that the probabilities under consideration decay asymptotically as $n^{-1/3}$, where $n$ is the common length of the Bernoulli trials. The main ingredient in the proof is a new fixed-point theorem for non-contractive symmetric functions of the unit interval.
\end{abstract}

\maketitle

\section{Introduction and main results}

In a frequentist interpretation, the probability of an event is defined as its asymptotic relative frequency in a large number of independent experiments. In modern axiomatic probability theory, this interpretation is reflected in various forms of the law of large numbers. We refer the reader to any standard text book of probability theory for a technical discussion of these topics and to \citet{vonmises1981probability} for a more philosophical account. Frequentism suggests that a sequence of $n_1$ independent experiments with individual probability of success $p$ (such as the tossing of a biased coin), would yield, on average, $n_1p$ successes. This is reflected by the fact that the number of successes in this setup follows a $\operatorname{Bin}(n_1,p)$ binomial distribution which assigns probability $p_{m_1;n_1}^{(1)}=\binom{n_1}{m_1}p^{m_1}(1-p)^{n_1-m_1}$ to the event of observing $m_1$ successes, and has expected value $n_1p$. In a genuinely frequentist approach, these probabilities should be 
interpreted, again, as limits of relative frequencies. More precisely, if the sequence of $n_1$ independent experiments were to be repeated, 
independently, $n_2$ times, then,
on average, one would observe $n_2 p_{m_1,n_1}^{(1)}$ runs with $m_1$ successes. In fact, for each $m_1=1,\ldots,n_1$, the number of runs with exactly $m_1$ successes follows a $\operatorname{Bin}(n_2,p_{m_1,n_1}^{(1)})$ binomial distribution, which is defined by the probabilities
\begin{equation*}
p_{m_1,m_2;n_1,n_2}^{(2)}=\binom{n_2}{m_2}\left(p_{m_1,n_1}^{(1)}\right)^{m_2}\left(1-p_{m_1,n_1}^{(1)}\right)^{n_2-m_2}
\end{equation*}
of observing $m_2$ runs with $m_1$ successes. Iteration of this process leads to the recursive definition
\begin{equation*}
p_{m_1,\ldots,m_k;n_1,\ldots,n_k}^{(k)}=\binom{n_k}{m_k}\left(p_{m_1,\ldots,m_{k-1};n_1,\ldots,n_{k-1}}^{(k-1)}\right)^{m_k}\left(1-p_{m_1,\ldots,m_{k-1};n_1,\ldots,n_{k-1}}^{(k-1)}\right)^{n_k-m_k}.
\end{equation*}

In the following we restrict our attention to the special case where the numbers $n_k$ are all equal to some $n$ and the numbers $m_k$ are equal to the expected number of successes at stage $k$, i.\,e.\ $m_1=np$, $m_2=np_{np;n}^{(1)}$, and so on. The numbers $m_k$ will not, in general, be integers, unless $p$ is rational and $n$ is sufficiently large. This could be remedied by considering the integer closest to $m_k$ instead, but we will not do that here. The subject of the paper is an asymptotic analysis of the array of numbers $p_{k,n}$ defined recursively by $p_{0,n}=p$ and
\begin{equation}
\label{eq-definition-pkn}
p_{k,n} = \binom{n}{np_{k-1,n}}\left(p_{k-1,n}\right)^{np_{k-1,n}}\left(1-p_{k-1,n}\right)^{n(1-p_{k-1,n})},\quad k\geq 1.
\end{equation}
Surprisingly, these numbers do not seem to have received any attention in the past. They arise very naturally, however. They are the probabilities that, in a nested series of Bernoulli experiments, the number of successes coincides at each stage with the expected number of successes. Classically, the binomial coefficient $\binom{n}{m}$ is defined for positive integers $m\leq n$ by the formula $n!/m!(n-m)!$. We extend this definition to the case of real numbers by replacing the factorials in the denominator by Gamma functions, i.\,e.\ $\binom{n}{\alpha}=n!/\Gamma(\alpha+1)\Gamma(n-\alpha+1)$, $0<\alpha\leq n$. For two sequences $a_n$, $b_n$ of positive real numbers we write $a_n\sim b_n$ if $\lim_{n\to\infty}a_n/b_n=1$.

With the following result we initiate the study of the probabilities $p_{k,n}$.

\begin{theorem}
\label{thm-k-fixed}
For every positive integer $k$ and every $p\in(0,1)$, there exist $\alpha_k$ and $\beta_k$ such that $p_{k,n}\sim \alpha_k(2\pi n)^{-\beta_k}$ as $n\to\infty$. The numbers $\alpha_k$ are given by $\alpha_k=[p(1-p)]^{(-1/2)^k}$. The rates $\beta_k$ do not depend on the initial value $p_{0,n}=p$ and are given by $\beta_k=[1-(-1/2)^k]/3$; in particular, $\alpha_k$ and $\beta_k$ converge to $1$ and $1/3$, respectively.
\end{theorem}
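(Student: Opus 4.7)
The plan is to proceed by induction on $k$, with the key analytic input being Stirling's approximation for the Gamma function, i.e.\ $\Gamma(x+1)\sim\sqrt{2\pi x}(x/\ee)^x$ as $x\to\infty$. Applied to the three Gamma functions appearing in $\binom{n}{n q}$ via the author's extended definition, this yields, for any sequence $q=q_n\in(0,1)$ satisfying $nq_n\to\infty$ and $n(1-q_n)\to\infty$, the local limit estimate
\begin{equation*}
\binom{n}{n q_n} q_n^{n q_n}(1-q_n)^{n(1-q_n)}\sim \frac{1}{\sqrt{2\pi n\,q_n(1-q_n)}}.
\end{equation*}
For the base case $k=1$ we apply this with $q_n\equiv p$ to obtain $p_{1,n}\sim[p(1-p)]^{-1/2}(2\pi n)^{-1/2}$, which matches $\alpha_1=[p(1-p)]^{-1/2}$ and $\beta_1=1/2$.

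For the inductive step, assume $p_{k-1,n}\sim \alpha_{k-1}(2\pi n)^{-\beta_{k-1}}$ with $0<\beta_{k-1}<1$. Setting $q_n:=p_{k-1,n}$, we have $q_n\to 0$ but $nq_n\sim\alpha_{k-1}(2\pi n)^{1-\beta_{k-1}}\to\infty$, so the local limit estimate above applies and, after absorbing $(1-q_n)^{1/2}\to 1$, gives
\begin{equation*}
p_{k,n}\sim\frac{1}{\sqrt{2\pi n\,q_n}}\sim \alpha_{k-1}^{-1/2}(2\pi n)^{-(1-\beta_{k-1})/2}.
\end{equation*}
This yields the coupled recurrences $\alpha_k=\alpha_{k-1}^{-1/2}$ and $\beta_k=(1-\beta_{k-1})/2$, and along the way confirms the hypothesis $\beta_k<1$ needed at the next stage.

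It remains to solve the recurrences in closed form. The multiplicative $\alpha$-recurrence gives $\alpha_k=\alpha_1^{(-1/2)^{k-1}}=[p(1-p)]^{(-1/2)^k}$ immediately. The affine $\beta$-recurrence has unique fixed point $1/3$; writing $\gamma_k:=\beta_k-1/3$, one obtains $\gamma_k=-\gamma_{k-1}/2$, hence $\gamma_k=(-1/2)^{k-1}\gamma_1=(-1/2)^{k-1}/6=-(-1/2)^k/3$, which rearranges to $\beta_k=[1-(-1/2)^k]/3$. In particular $\beta_k$ is independent of $p$, and since $(-1/2)^k\to 0$ we get $\alpha_k\to 1$ and $\beta_k\to 1/3$.

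The only delicate point is that Stirling's approximation for $\binom{n}{n q_n}$ requires $n q_n\to\infty$, so one has to use the inductive control on $\beta_{k-1}<1$ at each step; this is the one place where the induction could in principle break down, but the recursion $\beta_k=(1-\beta_{k-1})/2$ with $\beta_1=1/2$ manifestly preserves $\beta_k\in(0,1/2]$, so the induction carries through without further work.
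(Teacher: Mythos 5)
Your proposal is correct and follows essentially the same route as the paper: induction on $k$, Stirling's approximation yielding $p_{k,n}\sim[2\pi n\,p_{k-1,n}(1-p_{k-1,n})]^{-1/2}$, and the resulting recurrences $\alpha_k=\alpha_{k-1}^{-1/2}$, $\beta_k=(1-\beta_{k-1})/2$. Your explicit verification that $n p_{k-1,n}\to\infty$ (via $\beta_{k-1}<1$) is a point the paper leaves implicit, but it does not change the argument.
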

Interestingly, the rates $\beta_k$ are related to the well-known Jacobsthal numbers $J_k$ (\href{http://oeis.org/A001045}{OEIS A001045}) via $2^k\beta_k = J_k$. The trivial cases $p\in\{0,1\}$ are easily dealt with separately and are seen to lead to $p_{k,n}\equiv 1$. In the next result we look at the array $(p_{k,n})$ from a different angle and consider the case where $k$ tends to infinity while $n$ is held constant.
\begin{theorem}
\label{thm-n-fixed}
For every positive integer $n$ and every $p\in(0,1)$, the probabilities $p_{k,n}$ converge, as $k\to\infty$, to a limit $p_n\in(0,1)$. This limit is independent of $p$ and is characterised by being the unique solution of the fixed-point equation
\begin{equation}
\label{eq-fixed-point}
p_n=\binom{n}{np_n}p_n^{np_n}(1-p_n)^{n(1-p_n)}.
\end{equation}
Furthermore, $p_n\sim (2\pi n)^{-1/3}$, as $n\to\infty$.
\end{theorem}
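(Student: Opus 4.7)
Setting $f_n(x) = \binom{n}{nx}x^{nx}(1-x)^{n(1-x)}$ so that the recursion \eqref{eq-definition-pkn} reads $p_{k,n} = f_n(p_{k-1,n})$, the theorem reduces to three claims: $f_n$ has a unique fixed point $p_n$ in $(0,1)$; the iterates $f_n^{k}(p)$ converge to $p_n$ for every $p \in (0,1)$; and $p_n \sim (2\pi n)^{-1/3}$.

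For existence and uniqueness I would first record that $f_n$ is continuous, symmetric about $1/2$ (so $f_n(x)=f_n(1-x)$), extends continuously by $f_n(0)=f_n(1)=1$, and is strictly decreasing on $(0,1/2]$. The monotonicity is the main analytic point and is proved from
\[
 \frac{(\log f_n)'(x)}{n} = \psi(n(1-x)+1) - \psi(nx+1) + \log x - \log(1-x),
\]
where $\psi$ is the digamma function; one verifies negativity on $(0,1/2)$ via the integral representation of $\psi$ together with a direct comparison of the two pairs of terms. Combined with $f_n(1/2) < 1/2$ (direct for small $n$ and from Stirling applied to $\binom{n}{n/2}$ otherwise), the intermediate value theorem yields a unique fixed point $p_n\in(0,1/2)$; symmetry and $f_n(1/2)<1/2$ then rule out further fixed points in $[1/2,1)$.

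The convergence $p_{k,n}\to p_n$ for arbitrary $p\in(0,1)$ is the principal difficulty: $f_n'(p_n)$ is negative and its modulus need not be smaller than $1$ for small $n$, so Banach's contraction principle does not apply directly. This is precisely the scenario covered by the fixed-point theorem for non-contractive symmetric maps of the unit interval advertised in the abstract, which I would invoke to obtain global attraction. A backup route exploits that $f_n\circ f_n$ is increasing on a neighbourhood of $p_n$, so that the even- and odd-indexed subsequences of $(p_{k,n})$ are monotone and therefore convergent; verifying that $f_n^2$ admits no fixed point besides $p_n$ in the orbit closure (equivalently, the absence of attracting $2$-cycles) forces both subsequences to tend to $p_n$.

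For the asymptotic, substitute Stirling's formula
\[
 \binom{n}{nx} \sim \frac{1}{\sqrt{2\pi n x(1-x)}}\, x^{-nx}(1-x)^{-n(1-x)},
\]
valid whenever $nx$ and $n(1-x)$ tend to infinity, into $f_n$ to obtain $f_n(x)\sim 1/\sqrt{2\pi n x(1-x)}$. The sandwich $f_n(1/2)\leq p_n\leq 1/2$ (the lower bound coming from $p_n=f_n(p_n)\geq f_n(1/2)$ by monotonicity) together with $f_n(1/2)\sim\sqrt{2/(\pi n)}$ ensures $np_n\to\infty$, while any limit point $p^*\in(0,1/2]$ of $p_n$ would satisfy $p^* = \lim f_n(p_n)=0$, forcing $p_n\to 0$. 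The fixed-point equation then reduces to $p_n^3(1-p_n)\sim(2\pi n)^{-1}$, giving $p_n\sim(2\pi n)^{-1/3}$. The main obstacle is clearly the convergence step; the remaining two are elementary in light of the Stirling machinery already developed for Theorem~\ref{thm-k-fixed}.
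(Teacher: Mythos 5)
Your outline matches the paper's strategy---a shape analysis of $P_n$ for existence and uniqueness, a non-contractive fixed-point argument for convergence, and Stirling for the asymptotics (your treatment of the last step, with the sandwich $P_n(1/2)\leq p_n\leq 1/2$ used to justify $np_n\to\infty$ and $p_n\to 0$ before applying Stirling, is in fact more careful than the paper's own two-line version). But the convergence step, which you correctly identify as the principal difficulty, is not actually proved. The ``fixed-point theorem for non-contractive symmetric maps advertised in the abstract'' is not a citable external result: it is \cref{lem-fixedpoint} of this very paper, and proving it---together with verifying its hypotheses for $P_n$---is the main content of the argument, so invoking it is circular. Your backup route via monotonicity of the even- and odd-indexed subsequences under $P_n\circ P_n$ is the right idea, but it stops exactly where the work starts: one must (a) show the orbit eventually enters an interval on which $P_n\circ P_n$ is increasing and self-mapping (note $P_n\circ P_n$ is \emph{not} monotone on all of $(0,1/2)$), and (b) exclude fixed points of $P_n\circ P_n$ other than $p_n$, i.e.\ exclude $2$-cycles. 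The paper reduces (b) to the concrete inequality $P_n(c_n)<1/2$ with $c_n=P_n(1/2)=2^{-n}\binom{n}{n/2}$, whose verification for $n>3$ requires a delicate chain of central-binomial and Stirling estimates, and it additionally needs the local contraction $|P_n'(p_n)|<1$, proved via polygamma inequalities. None of this quantitative work appears in your proposal, and it cannot be waved away: for small $n$ the map is far from a contraction and an attracting $2$-cycle is a genuine possibility that must be excluded by computation.

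A smaller but real error: the inequality $P_n(1/2)<1/2$ that anchors your existence argument is false for $n=1$, where $P_1(1/2)=2/\pi>1/2$ and the unique fixed point lies in $(1/2,1)$ rather than $(0,1/2)$, and it is an equality for $n=2$, where $p_2=1/2$. As written, your intermediate-value argument on $(0,1/2]$ does not even locate the fixed point for $n=1$; these cases need the separate treatment the paper gives them. Finally, your claim that $P_n$ is strictly decreasing on $(0,1/2]$ is true and would serve the same purpose as the paper's log-convexity (\cref{lem-convexity}), but the digamma computation you sketch for it is comparable in difficulty to the paper's convexity proof and would still need to be carried out.
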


\section{Proofs}

In this section we present proofs of \cref{thm-k-fixed} and \cref{thm-n-fixed}. We will repeatedly use Stirling's approximation for factorials (or the Gamma function).
\begin{lemma}[Stirling's approximation]
\label{lem-Stirling-approx}
For every positive integer $n$, the factorial $n!$ satisfies
\begin{equation}
\label{eq-Stirling-approx}
\sqrt{2\pi}n^{n+1/2}\ee^{-n} \leq n! \leq \ee n^{n+1/2}\ee^{-n},\quad \text{and}\quad n!\sim \sqrt{2\pi}n^{n+1/2}\ee^{-n}.
\end{equation}
An analogous approximation holds for the Gamma function $\Gamma$. In particular, for every positive integer $n$ and every positive rational number $\alpha$ not exceeding $n$, the binomial coefficient $\binom{n}{\alpha}$ satisfies
\begin{equation}
\frac{\sqrt{2\pi}n^{n+1/2}\ee^{-n}}{\ee^2\alpha^{\alpha+1/2}\ee^{-\alpha}(n-\alpha)^{n-\alpha+1/2}\ee^{-n-\alpha}} \leq \binom{n}{\alpha} \leq \frac{\ee n^{n+1/2}\ee^{-n}}{2\pi \alpha^{\alpha+1/2}\ee^{-\alpha}(n-\alpha)^{n-\alpha+1/2}\ee^{-n-\alpha}}
\end{equation}

\end{lemma}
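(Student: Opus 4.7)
The plan is to establish the factorial part first via a Robbins-style monotonicity argument, then transfer the result to the Gamma function, and finally derive the binomial coefficient bounds by direct substitution.

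First, I would define $d_n := \log(n!) - (n + 1/2)\log n + n$ and study its monotonicity. Writing $u = 1/(2n+1)$ so that $(n+1/2)\log(1+1/n) = (2u)^{-1}\log\tfrac{1+u}{1-u}$, the $\operatorname{artanh}$ series yields
$$d_n - d_{n+1} = \sum_{k=1}^\infty \frac{1}{(2k+1)(2n+1)^{2k}},$$
which is strictly positive and, by comparison with a geometric series, bounded above by $1/(12n(n+1))$. Hence $(d_n)$ is strictly decreasing and Cauchy, so it converges to some limit $C$. This already gives $n! = \ee^{d_n} n^{n+1/2} \ee^{-n}$ with $d_n \to C$, yielding the form of the asymptotic up to the constant.

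The second step identifies $C = \tfrac{1}{2}\log(2\pi)$ via Wallis's product $\prod_{k\geq 1}\frac{(2k)^2}{(2k-1)(2k+1)} = \pi/2$: rewriting the finite products in closed form using factorials, substituting the preliminary asymptotic $n! \sim \ee^C n^{n+1/2}\ee^{-n}$, and passing to the limit pins down $\ee^C = \sqrt{2\pi}$. The explicit lower bound in~\eqref{eq-Stirling-approx} is then immediate from $d_n \searrow \tfrac{1}{2}\log(2\pi)$, and the explicit upper bound follows from the monotonicity together with the easily verified base case $d_1 = 1$, giving $d_n \leq 1$ for all $n \geq 1$.

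For the Gamma function I would appeal to Binet's first integral representation
$$\log\Gamma(x) = (x - \tfrac{1}{2})\log x - x + \tfrac{1}{2}\log(2\pi) + 2\int_0^\infty \frac{\arctan(t/x)}{\ee^{2\pi t}-1}\,\dd t,$$
which delivers both the asymptotic and effective one-sided remainder bounds compatible with the stated inequalities; alternatively one can reduce to the factorial case via the functional equation $\Gamma(x+1)=x\Gamma(x)$ combined with Bohr--Mollerup. The binomial coefficient estimates are then a one-line substitution into $\binom{n}{\alpha} = \Gamma(n+1)/[\Gamma(\alpha+1)\Gamma(n-\alpha+1)]$: to get the upper bound on $\binom{n}{\alpha}$, combine the upper Stirling estimate on the numerator with the lower Stirling estimates on the two Gamma factors in the denominator, and vice versa for the lower bound.

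The only nontrivial step is the identification of the constant $C$; the monotonicity argument is elementary but is ``blind'' to the actual value of the limit, so an external identity (Wallis's product, or equivalently the Gaussian integral via the duplication formula) has to be invoked. All the remaining manipulations are routine calculus and algebra.
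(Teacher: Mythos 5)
Your proposal is correct and is essentially the paper's own argument: the paper proves this lemma simply by citing \citet{robbins1955remark}, and your construction of $d_n$, the $\operatorname{artanh}$ series for $d_n-d_{n+1}$, the identification of the constant via Wallis's product, and the base case $d_1=1$ for the upper bound reconstruct exactly Robbins's proof, with the Gamma-function extension and the binomial-coefficient substitution handled as the paper implicitly intends. No substantive difference in approach.
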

\begin{proof}
Immediate consequences of \citet{robbins1955remark}.
\end{proof}

We now give the proof of our first main result.

\begin{proof}[Proof of \cref{thm-k-fixed}]
Induction. We first consider the base case. Here, Stirling's approximation (\cref{lem-Stirling-approx}) applied to \cref{eq-definition-pkn} with $k=1$ shows that
\begin{equation*}
p_{1,n} \sim \frac{1}{\sqrt{2\pi p(1-p)}\sqrt{n}},
\end{equation*}
and thus $p_{1,n}\sim \alpha_1(2\pi n)^{-\beta_1}$ with $\alpha_1=1/\sqrt{p(1-p)}$ and $\beta_1=1/2$. Now, assuming that $p_{k-1,n}\sim \alpha_{k-1}(2\pi n)^{-\beta_{k-1}}$, we prove the corresponding statement for $p_{k,n}$, $k>1$. The same approximation as before, applied to \cref{eq-definition-pkn}, yields
\begin{equation*}
p_{k,n}\sim \frac{1}{\sqrt{2\pi p_{k-1,n}(1-p_{k-1,n})}\sqrt{n}}\sim \frac{1}{\sqrt{\alpha_{k-1}}}(2\pi n)^{-(1-b_{k-1})/2},
\end{equation*}
which proves the first part of the theorem. It also shows that the numbers $\alpha_k$ and $\beta_k$ satisfy the recursions
\begin{equation*}
\alpha_k=\frac{1}{\sqrt{\alpha_{k-1}}},\quad \beta_k=(1-\beta_{k-1})/2,
\end{equation*}
which are easily solved by the reader's favourite method.
\end{proof}

In the next lemma, we will investigate the function $P_n:p\mapsto \binom{n}{np}p^{np}(1-p)^{n(1-p)}$, which governs the recursion \labelcref{eq-definition-pkn}.

\begin{lemma}
\label{lem-convexity}
For every positive integer $n$, the function $P_n$ is convex.
\end{lemma}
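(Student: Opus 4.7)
My plan is to establish the stronger property that $P_n$ is log-convex on $(0,1)$, which immediately implies convexity since $(\ee^g)'' = (g'' + (g')^2)\ee^g$ is nonnegative whenever $g''$ is. Passing to logarithms has the advantage of turning the product structure into a sum and exposing the Gamma-function factors individually.

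The first step is to write
\[
\log P_n(p) = \log n! - \log \Gamma(np+1) - \log \Gamma(n(1-p)+1) + np\log p + n(1-p)\log(1-p),
\]
and then differentiate twice. The polynomial-logarithmic terms contribute $n/p + n/(1-p) = n/[p(1-p)]$, while the Gamma terms contribute $-n^2\bigl[\psi'(np+1) + \psi'(n(1-p)+1)\bigr]$, where $\psi'$ denotes the trigamma function. Thus
\[
(\log P_n)''(p) = \frac{n}{p(1-p)} - n^2\bigl[\psi'(np+1) + \psi'(n(1-p)+1)\bigr].
\]

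It then remains to check that the right-hand side is nonnegative. The key inequality is $\psi'(x+1) \leq 1/x$ for $x > 0$, which follows from the series representation $\psi'(x+1) = \sum_{j \geq 1}(x+j)^{-2}$ combined with an integral comparison against $\int_0^\infty(x+t)^{-2}\,\dd t = 1/x$ (the summand being decreasing in $j$). Applied with $x = np$ and with $x = n(1-p)$, and multiplied by $n$, this yields exactly the upper bound $n/[p(1-p)]$, so $(\log P_n)''(p) \geq 0$ as required.

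The main obstacle I anticipate is identifying and justifying the right bound on $\psi'(x+1)$; once that is in hand, the rest is mechanical. An alternative I would keep in mind is a direct computation of $P_n''$ together with a Stirling-type approximation as in \cref{lem-Stirling-approx}, but the logarithmic reformulation seems cleaner because it linearises the Gamma-function factors and isolates the convexity question in the single trigamma inequality above.
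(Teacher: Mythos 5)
Your proposal is correct, and while it follows the same overall strategy as the paper — proving log-convexity and arriving at the identical formula $(\log P_n)''(p) = n/[p(1-p)] - n^2[\psi'(np+1)+\psi'(n(1-p)+1)]$ — it finishes the argument by a genuinely different and markedly more elementary route. The paper expands both sides of the required inequality in power series around $p=1/2$, compares coefficients of $(p-1/2)^{2i}$ term by term, and invokes a quantitative polygamma bound of Chen together with a binomial-expansion estimate; this is several steps of nontrivial bookkeeping. You instead observe that the single inequality $\psi'(x+1)=\sum_{j\geq 1}(x+j)^{-2}\leq \int_0^\infty (x+t)^{-2}\,\dd t = 1/x$ (valid since the summand is decreasing in $j$) applied at $x=np$ and $x=n(1-p)$ bounds $n^2[\psi'(np+1)+\psi'(n(1-p)+1)]$ by exactly $n/p + n/(1-p) = n/[p(1-p)]$, so the second derivative of $\log P_n$ is nonnegative with no further work. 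Your bound is not strict, so you establish $(\log P_n)''\geq 0$ rather than the strict positivity the paper asserts, but weak log-convexity is all the lemma claims and all that is used later (the uniqueness argument in the fixed-point lemma only needs convexity of $g(x)=f(x)-x$). The trade-off is entirely in your favour here: you lose the strict inequality, which is not needed, and gain a self-contained one-line estimate in place of an external citation and a series comparison.
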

\begin{proof}
We will prove the stronger claim that $P_n$ is log-convex. An easy calculation yields
\begin{equation*}
\frac{1}{n}\frac{\dd^2}{\dd p^2}\log P_n(p) = \frac{1}{p(1-p)}-n\left[\psi^{(1)}(1+np)+\psi^{(1)}(1+n(1-p))\right],
\end{equation*}
where $\psi^{(\nu)}$ denotes the polygamma function of order $\nu$. See, for instance, \citet[Section 6.4.]{abramowitz1992handbook} for an introduction to, and basic properties of, these functions. To prove log-convexity of $P_n$, we thus need to argue that for all $p\in[0,1]$, $1/[p(1-p)]$ is greater than $n\left[\psi^{(1)}(1+np)+\psi^{(1)}(1+n(1-p))\right]$. It is easily seen that both expressions are symmetric around $p=1/2$ and possess power series expansions around that point. More precisely, one finds that
\begin{align*}
\frac{1}{p(1-p)} =& 4\sum_{i=0}^\infty{2^{2i}\left(p-1/2\right)^{2i}},\\
\intertext{and}
n\left[\psi^{(1)}(1+np)+\psi^{(1)}(1+n(1-p))\right] =& 2\sum_{i=0}^\infty{\frac{n^{2i+1}}{(2i)!}\psi^{(2i+1)}(n/2+1)\left(p-1/2\right)^{2i}}.
\end{align*}
We can thus prove the claim by arguing that, for every positive integer $n$ and every non-negative integer $i$,
\begin{equation*}
\frac{n^{2i+1}}{(2i)!}\psi^{(2i+1)}(n/2+1)<2^{2i+1}.
\end{equation*}
It follows from \citet[Theorem 1]{chen2005inequalities} (applied with $m=0$) that
\begin{equation*}
\psi^{(2i+1)}(n/2+1) < \frac{(2i)!}{(n/2+1)^{2i+1}}\left[1+\frac{2i+1}{n+2}+\frac{(2i+1)(2i+2)}{3(n+2)^2}\right],
\end{equation*}
and it thus suffices to prove that
\begin{equation*}
(1+2/n)^{2i+1} > 1+\frac{2i+1}{n+2}+\frac{(2i+1)(2i+2)}{3(n+2)^2}.
\end{equation*}
This is obvious for $i=0$. For positive $i$, it follows from the observation that the left-hand side is no less than the sum of the first three terms of its binomial expansion, and that this sum exceeds the right-hand side.
\end{proof}

In the next lemma we establish conditions for a function $f:[0,1]\to[0,1]$ to have a unique fixed point, and for the fixed-point iteration to converge to this fixed-point from any starting value. Notably, we do not assume that $f$ is a contraction. This fixed-point theorem will be the key ingredient in our proof of \cref{thm-n-fixed}.

\begin{lemma}
\label{lem-fixedpoint}
Let $f:[0,1]\to[0,1]$ be a differentiable convex function such that $f(0)=f(1)=1$ and $\lim_{x\to 1}f'(x)>1$. Then $f$ has exactly one fixed point $x^*$ in $(0,1)$. If, moreover, $|f'(x^*)|<1$, $f$ is symmetric around $1/2$ and $c\coloneqq f(1/2)$ is such that either $c<1/2$ and $f(c)<1/2$, or $c\geq 1/2,$ then, for every $x\in(0,1)$, the sequence of function iterates $(x,f(x),f(f(x)),\ldots)$ converges to $x^*$.
\end{lemma}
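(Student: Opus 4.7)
I would split the proof into two parts: existence and uniqueness of $x^*$, and convergence of the iterates under the extra hypotheses. For existence and uniqueness, set $g(x) := f(x) - x$, which is convex because $f$ is. One has $g(0) = 1 > 0$, $g(1) = 0$, and $g'(1^-) = \lim_{x \to 1}f'(x) - 1 > 0$, so $g < 0$ just to the left of $1$; the intermediate value theorem produces a zero $x^* \in (0,1)$. Uniqueness follows from the fact that a convex function with three distinct zeros on an interval must vanish on the convex hull of the extreme two, so a second zero of $g$ in $(0,1)$ together with $g(1) = 0$ would force $g \equiv 0$ on a left neighbourhood of $1$, contradicting $g'(1^-) > 0$.

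For convergence, symmetry and convexity make $f$ decreasing on $[0,1/2]$, increasing on $[1/2,1]$, with minimum $c = f(1/2)$; also $f < 1$ on $(0,1)$, so after one iteration every orbit lies in $[c,1)$. When $c \geq 1/2$, one has $x^* \geq c \geq 1/2$, the interval $[1/2,1)$ is forward invariant, and $f$ is increasing there; the iterates therefore form a bounded monotone sequence whose limit is the unique fixed point of $f$ in $[1/2,1)$, namely $x^*$; starting points in $(0,1/2)$ reach this regime after one step. When $c < 1/2$ and $f(c) < 1/2$, one first checks that $x^* \in [c, f(c)]$ and that the interval $I := [c, f(c)]$ is forward invariant (since $f$ is decreasing on $I \subset [0,1/2]$, $f(I) = [f(f(c)), f(c)] \subseteq I$); the assumption $f(c) < 1/2$ then ensures by a short case analysis using $f(x) = f(1-x)$ that after finitely many iterations every orbit is trapped in $I$. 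On $I$ the composition $F := f \circ f$ is increasing (two decreasing maps composed), so $\bigl(F^n(y_0)\bigr)$ is bounded and monotone for every $y_0 \in I$ and hence converges to some fixed point of $F$ in $I$; continuity of $f$ then propagates this to the full orbit $\bigl(f^n(x_0)\bigr)$, provided the $F$-limit is $x^*$.

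The \textbf{main obstacle} is therefore to verify that $x^*$ is the \emph{only} fixed point of $F$ inside $I$, equivalently, that $f$ admits no $2$-cycle in $I$. Assuming for contradiction a cycle $\{a, b\} \subset I$ with $a < b$, monotonicity of $f$ on $I$ forces $a < x^* < b$, and the mean value theorem yields $\xi \in (a,b)$ with $f'(\xi) = -1$; monotonicity of $f'$ (convexity of $f$) together with $|f'(x^*)| < 1$ then gives $\xi < x^*$. Introducing the convex function $\phi(x) := f(x) + x$, which satisfies $\phi(a) = \phi(b) = a+b$, and applying the convexity inequality at $x^*$ yields $2x^* \leq a+b$; the equality case would force $\phi \equiv a+b$ on $[a,b]$ and hence $f$ affine of slope $-1$ there, contradicting $|f'(x^*)| < 1$. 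The strict case $2x^* < a+b$ is more delicate and requires a finer analysis of $f$ on a subinterval containing $x^*$, combining convexity, the monotonicity of $f'$, and the strict bound on $|f'(x^*)|$ to produce a contradiction; I expect this step to be the technical heart of the lemma.
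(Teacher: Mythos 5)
Your handling of existence and uniqueness of $x^*$, and your analysis of the case $c\geq 1/2$ via the forward-invariant interval $[1/2,1)$ and monotone iterates, are sound; this part is a legitimate and arguably more elementary alternative to the paper's route, which instead gets local convergence from $|f'(x^*)|<1$ via Banach's contraction principle and then enlarges the basin by a maximal-interval argument that exploits the symmetry of $f$ to force $f(a)=f(b)$ at the endpoints of the basin.

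The genuine gap is exactly where you place it, and it is not a removable technicality of exposition: in the case $c<1/2$ you must show that $x^*$ is the \emph{only} fixed point of $f\circ f$ in $[c,f(c)]$, i.e.\ that $f$ has no $2$-cycle there, and your sketch does not achieve this. The inequality $2x^*\leq a+b$ obtained from the convexity of $\phi(x)=f(x)+x$ holds for \emph{any} convex $f$ with a $2$-cycle $\{a,b\}$ straddling $x^*$ (it also follows from $f'(x^*)>-1$ together with the monotonicity of $f'$), so it cannot by itself produce a contradiction; convexity, symmetry and $|f'(x^*)|<1$ do not obviously exclude $2$-cycles, and some genuine use of the hypothesis $f(c)<1/2$ beyond the forward-invariance of $[c,f(c)]$ appears unavoidable. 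This is precisely the point at which the paper invokes that hypothesis: it asserts that $f(c)<1/2$ forces $f\circ f$ to have a unique fixed point in $(0,1)$, and then lets the decreasing even iterates of $1/2$ converge to that fixed point, which must therefore be $x^*$. Until the $2$-cycle exclusion is supplied, your argument only shows that the even and odd subsequences of an orbit converge to possibly distinct fixed points of $f\circ f$ in $[c,f(c)]$, not that the orbit converges to $x^*$.
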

\begin{proof}
Since $f(1)=1$ and $\lim_{x\to 1}f'(x)>1$, there exists $y\in(0,1)$, such that $f(y)<y$. If we define $g:x\mapsto f(x)-x$ on $[0,1]$, then $g(0)=1>0$ and $g(y)<0$, and thus, by the intermediate value theorem, there exists $x^*\in(0,y)$, such that $g(x^*)=0$, i.\,e. $x^*$ is a fixed point $f$. Uniqueness follows from the observation that $g(1)=0$ and the fact that the convex function $g$ can have at most two zeros.

For the second part of the lemma we observe that the condition $|f'(x^*)|<1$ implies via Banach's contraction principle \citep[Theorem 1.1]{granas2003fixed} that there exists a neighbourhood $I$ of $x^*$ such that the iteration of $f$ on $I$ converges to $x^*$. We can thus define $J=(a,b)$ as the maximal interval containing $I$ with the property that the iteration of $f$ on $J$ converges to $x^*$. We will show that $a=0$, $b=1$. It is clear from the continuity of $f$ that $J$ is open. By definition, the image of $J$ under $f$ is contained in $J$, and since $J$ is  maximal the images of the boundary points $a$, $b$ are not elements of $J$. This implies that $f(a), f(b)\in \{a,b\}$. Since $f(a)=a$ would imply the contradiction that $a\in\{x^*,1\}$, we have $f(a)=b$. We therefore need to rule out the possibility $f(b)=a$ so that it will follow that $f(b)=b$ and thus $b=1$ and $a=0$. 
We first consider the case $c=f(1/2)<1/2$, which implies $x^*<1/2$ and $1/2\in J$. To see this, we observe that the sequence $f^{2i}(1/2)$ of even function iterates is a decreasing sequence and bounded from below, and therefore converges to, say, $y$, which is a fixed point of $f\circ f$. The condition $f(c)<1/2$ implies that $f\circ f$ has only one fixed point in $(0,1)$ and since $x^*$ is a fixed point of $f\circ f$, it follows that $y=p$ and $1/2\in J$. Hence, by the symmetry of $f$, the maximal interval $J=(a,b)$ is symmetric around $1/2$ and one has $f(a)=f(b)$. This implies that $f(b)$ cannot equal $a$ because otherwise $b=f(a)=f(b)=a$. The argument is similar for the case $c\geq 1/2$ except that no assumption on $f(c)$ is necessary to conclude that $1/2\in J$.
\end{proof}
An illustration of the function $P_n$ and the fixed-point iteration considered in \cref{lem-fixedpoint} is provided in \cref{fig-Pnfixedpoint}. We now proceed to the proof of our second main result.
\begin{figure}
\centering
\begin{psfrags}
\input{./pnfixedpoint-psfrag.txt}
\includegraphics[width=0.75\textwidth]{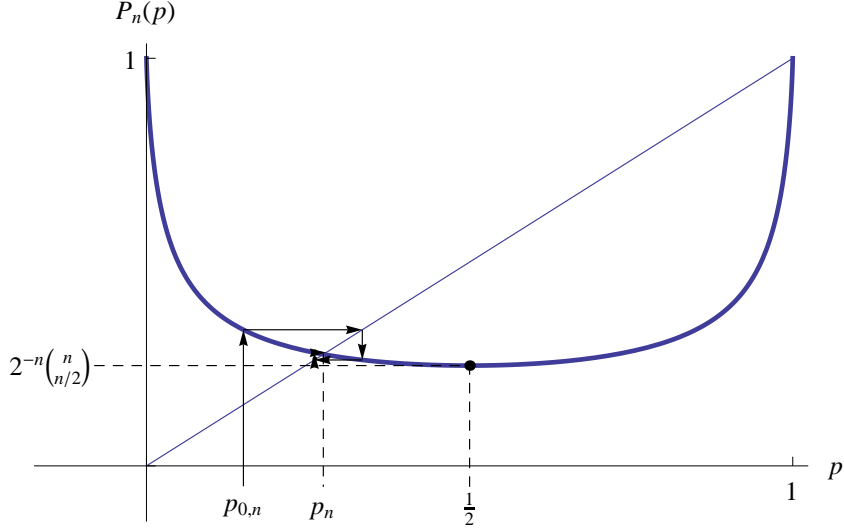}
\end{psfrags}
\caption{Illustration of the function $P_n$ governing the recursion \labelcref{eq-definition-pkn} and the fixed-point iteration considered in the proof of \cref{lem-fixedpoint}. In the picture, $n=10$ and $p_{0,n}=0.15$.}
\label{fig-Pnfixedpoint}
\end{figure}

\begin{proof}[Proof of \cref{thm-n-fixed}]
We first observe that the functions $P_n$ satisfy
\begin{equation*}
\lim_{p\to0}P_n(p) = \lim_{p\to1}P_n(p) = 1, \quad \lim_{p\to0}P_n'(p) = -\infty, \quad \lim_{p\to1}P_n'(p) = +\infty,
\end{equation*}
and apply \cref{lem-convexity,lem-fixedpoint} to conclude that they possess a unique fixed point $p_n\in(0,1)$. In order to establish the convergence $p_{k,n}\to p_n$, we need to verify the assumptions of the second part of \cref{lem-fixedpoint}.

We first need to analyse in more detail the point $(c_n,P_n(c_n))$, where $c_n=P_n(1/2)=2^{-n}\binom{n}{n/2}$. For $n=1,2$ one obtains $c_1=2/\pi>1/2$ and $c_2=1/2$, respectively, and the conclusion follows. For $n\geq 2$, the numbers $c_n$ are less than $1/2$ and we thus need to show that $P_n(c_n)<1/2$. This is easily checked numerically for $n=3$. For $n>3$, it follows from an application of the estimate
\begin{equation*}
\frac{2^n}{\sqrt{\pi(n+1)/2}} \leq \binom{n}{n/2} \leq \frac{2^n}{\sqrt{\pi n/2}}
\end{equation*}
to the central binomial coefficients followed by an application of Stirling's bounds (\cref{lem-Stirling-approx}) to $\binom{n}{\sqrt{2n/\pi}}$ that
\begin{align*}
P_n(c_n) =& \binom{n}{n2^{-n}\binom{n}{n/2}}\left[2^{-n} \binom{n}{n/2}\right]^{n2^{-n}\binom{n}{n/2}} \left[1-2^{-n} \binom{n}{n/2}\right]^{n\left(1-2^{-n}\binom{n}{n/2}\right)}\\
\leq & \binom{n}{\sqrt{2 n/\pi}} \left(\frac{2}{\pi  n}\right)^{\sqrt{\frac{n}{2 \pi }}} \left(1-\sqrt{\frac{2}{\pi  (n+1)}}\right)^{n \left(1-\sqrt{\frac{2}{\pi  (n+1)}}\right)}\\
\leq & \frac{\ee}{\sqrt[4]{2^5 \pi ^3n}}\left(1-\sqrt{\frac{2}{\pi  n}}\right)^{\sqrt{\frac{2 n}{\pi }}-n-\frac{1}{2}} \left(1-\sqrt{\frac{2}{\pi  (n+1)}}\right)^{n \left(1-\sqrt{\frac{2}{\pi  (n+1)}}\right)}.
\end{align*}
To get from the first to the second line, we used that both $\binom{n}{n/2}$ and its bound $2^n/\sqrt{\pi n/2}$ are less than $2^{n-1}$ and that $x\mapsto\binom{n}{2^{-n} n x}$ is increasing for $x<2^{n-1}$. Showing that the last line in the previous display is less than $1/2$ for all $n$ greater than three is a matter of basic, yet tedious, calculations. We next prove that $|P_n'(p_n)|<1$. Differentiation of $P_n$ and simplification of the resulting expression using $p_n=P_n(p_n)$ shows that
\begin{equation}
\label{eq-Pnprimepn}
P_n'(p_n)=n p_n \left(\psi^{(0)}(1+n(1-p_n))-\psi^{(0)}(1+np_n)-\log[(1-p)/p]\right).
\end{equation}
We will only consider the case $n>2$. For $n=1,2$ the claim can be checked numerically or dealt with by a straightforward adaptation of the arguments we are about to present. The proof so far has shown that, for $n>2$, the fixed point $p_n$ is less that $1/2$, and that $P_n'(p_n)$ is thus negative. To show that the right-hand side of \cref{eq-Pnprimepn} exceeds $-1$, we use the bounds \citep[Theorem 1]{chen2005inequalities}
\begin{equation*}
\log x-\frac{1}{2x} - \frac{1}{6x^2}\leq \psi^{(0)}(x) \leq \log x-\frac{1}{2x},
\end{equation*}
and 
\begin{equation*}
1-\frac{1}{1-x}\leq \log (1+x) \leq x,
\end{equation*}
which are valid for positive $x$. We thus obtain
\begin{equation*}
P_n'(p_n)\geq -1+\frac{1}{2}\frac{1+n}{1+n(1-p_n)}+\frac{1}{6}\frac{1}{(1+n p_n)^2}-\frac{2}{3}\frac{1}{1+n p_n}\eqqcolon -1+r_n(p_n).
\end{equation*}
It is an easy exercise to show that $r_n(p)$ is greater than zero for all $p\in(0,1)$.

The final statement of the theorem about the decay rate of $p_n$ as $n$ tends to infinity follows again from applying \cref{eq-Stirling-approx} to \cref{eq-fixed-point}, which yields
\begin{equation*}
p_n \sim \frac{1}{\sqrt{2\pi p_n(1-p_n)}\sqrt{n}},
\end{equation*}
and thus $p_n\sim (2\pi n)^{-1/3}$.
\end{proof}


\end{document}